\newtheorem{teo}[equation]{Theorem}
\newtheorem{cor}[equation]{Corollary}
\newtheorem{lemma}[equation]{Lemma}
\newtheoremstyle{named}{}{}{\itshape}{}{\bfseries}{.}{.5em}{\thmnote{#3}#1}
\theoremstyle{named}
\newtheorem*{namedtheorem}{}
\theoremstyle{definition}
\newtheorem{Example}{Example}
\newcommand{\Keler}             {K\"{a}hler }
\newcommand{\cd}{\cdot}
\renewcommand{\setminus}{-}
\newcommand{\om}{\omega}
\newcommand{\tmu}{\tilde{\mu}}
\newcommand{\tmup}{\tilde{\mu}_\liep}
\newcommand{\mutp}{\tilde{\mu}_\liep}
\newcommand{\omt}{\tilde{\omega}}
\newcommand{\xbp} {X_{||\beta||^2}^{\beta+}}
\renewcommand{\phi}{\varphi}
\newcommand{\cinf}{C^\infty}
\newcommand{\ra}{\rightarrow}
\newcommand{\lra}{\longrightarrow}
\newcommand{\C}{\mathbb{C}}
\newcommand{\R}{\mathbb{R}}
\newcommand{\restr}[1]          {\vert_{#1}}
\newcommand{\Ad}{\operatorname{Ad}}
\newcommand{\demi}{\frac{1}{2}}
\newcommand{\ga}{\gamma}
\newcommand{\meno}{^{-1}}
\newcommand{\PP}{\mathbb{P}}
\newcommand{\enf}{\emph}
\newcommand{\liu}{\mathfrak{u}}
\newcommand{\lia}{\mathfrak{a}}
\newcommand{\mut}{\tilde{\mu}}
\newcommand{\lieg}{\mathfrak{g}}
\newcommand{\liep}{\mathfrak{p}}
\newcommand{\lieb}{\mathfrak{b}}
\newcommand{\rrr}{{||\beta||^2}}
\newcommand{\liet}{\mathfrak{t}}
\newcommand{\chern}{\operatorname{c}}
\newcommand{\vacuo}{\emptyset}
\newcommand{\sx}{\langle}                   
\newcommand{\xs}{\rangle}
\newcommand{\Crit}{\operatorname{Crit}}     
\newcommand{\mup}{\mu_\liep}
\newcommand{\mua}{\mu_\lia}
\newcommand{\scalo} {\sx \ , \ \xs}
\newcommand{\mupbe}{\mu_{\liep^\beta}}
\begin{document}

\author{Leonardo Biliotti \and Alessandro Ghigi \and Peter Heinzner}
\title{A remark on the gradient map}

\address{Universit\`{a} di Parma} \email{leonardo.biliotti@unipr.it}
\address{Universit\`a di Milano Bicocca}
\email{alessandro.ghigi@unimib.it} \address{Ruhr Universit\"at Bochum}
\email{peter.heinzner@rub.de}

\thanks{The
  first two authors were partially supported by a grant of Max-Plank
  Institute f\"ur Mathematik, Bonn, and by FIRB 2012 MIUR ``Geometria
  differenziale e teoria geometrica delle
  funzioni''. 
  The second author was partially supported also by PRIN 2009 MIUR
  ''Moduli, strutture geo\-me\-tri\-che e loro applicazioni''.  The
  third author was partially supported by DFG-priority program SPP
  1388 (Darstellungstheorie).}
\subjclass[2000]{53D20}
\begin{abstract}
  For a Hamiltonian action of a compact group $U$ of isometries on a
  compact K\"ahler manifold $Z$ and a compatible subgroup $G$ of
  $U^\C$, we prove that for any closed $G$--invariant subset $Y\subset
  Z$ the image of the gradient map $\mup(Y)$ is independent of the
  choice of the invariant K\"ahler form $\om$ in its cohomology class
  $[\om]$.
\end{abstract}

\maketitle

\section{Introduction}

Let $(Z, \om)$ be a compact \Keler manifold and let $U$ be a compact
connected semisimple Lie group such that $U^\C$ acts holomorphically
on $Z$, $U$ preserves $\om$ and there is a momentum map  $\mu: Z \ra
\liu^* $. Let $G\subset U^\C$ be a \emph{compatible} subgroup. By this
we mean a subgroup which is compatible with the Cartan
involution $\Theta$ of $U^\C$ which defines $U$, i.e. if $\liep = \lieg \cap
i \liu$ and $K=U \cap G$, then $G= K \cd \exp \liep$. Let $ \mu_\liep : Z \ra \liep $ be the associated \emph{gradient map}
(see \cite{heinzner-schwarz-stoetzel, heinzner-stoetzel-global} or section \ref{background}).

In this note we prove the following.

\begin{teo} \label{image-0} Let $Y\subset Z$ be a closed $G$-stable
  subset. Then up to translation the set $\mup (Y)$ is independent of
  the choice of the invariant K\"ahler form $\om$ in the cohomology
  class $[\om]$.
\end{teo}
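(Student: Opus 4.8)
\bigskip
\noindent\textbf{Proof strategy.}
The plan is to deform $\om$ inside $[\om]$ and to show that the only freedom left in $\mup(Y)$ is an overall translation, by pushing all the relevant data onto fixed point sets of one--parameter subgroups and then inducting on $\dim Z$. By the $\idd$--lemma any invariant \Keler form $\om'$ with $[\om']=[\om]$ is $\om'=\om+\idd\phi$ with $\phi$ real, and, averaging over $U$, with $\phi$ $U$--invariant; since the \Keler forms in a fixed class are convex, $\om_t:=\om+t\,\idd\phi$ is \Keler for all $t\in[0,1]$ and it suffices to compare $\mup^t$ (the gradient map of $\om_t$) with $\mup^0=\mup$. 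For $\beta\in\liep$ put $\bar\beta:=-i\beta\in\liu$, so that $\langle\mup^t,\beta\rangle$ is the $\bar\beta$--component $\mu^{\bar\beta}_t$ of the momentum map of $\om_t$; writing $\idd\phi=\tfrac12 d(d^c\phi)$ with $d^c\phi$ a $U$--invariant $1$--form and applying Cartan's formula, the Lie derivative term vanishes and, by uniqueness of the momentum map of the semisimple group $U$, one obtains the variation formula
\begin{equation*}
  \langle\mup^t(x),\beta\rangle=\langle\mup^0(x),\beta\rangle+\tfrac{t}{2}\,(\beta_Z\phi)(x),\qquad \beta\in\liep,
\end{equation*}
where $\beta_Z$ is the fundamental vector field of $\beta$.

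The decisive observation is that on $Z^\beta:=\{x:\beta_Z(x)=0\}$ -- a complex submanifold independent of $\om$ -- the correction term vanishes, so $\langle\mup^t,\beta\rangle=\langle\mup^0,\beta\rangle$ on $Z^\beta$ for every $t$. On the other hand the flow of $\beta_Z$ is the $\om_t$--gradient flow of $\langle\mup^t,\beta\rangle$, so $s\mapsto\langle\mup^t(\exp(s\beta)x),\beta\rangle$ is non--decreasing; since $Z$ is compact and $Y$ is closed and $G$--stable, this forces $\sup_Y\langle\mup^t,\beta\rangle=\sup_{Y\cap Z^\beta}\langle\mup^t,\beta\rangle$, attained. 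Hence for every $\beta$ the number $\sup_Y\langle\mup^t,\beta\rangle$, the face preimage $F_\beta:=\{x\in Y\cap Z^\beta:\langle\mup^0(x),\beta\rangle=\sup_Y\langle\mup^0,\beta\rangle\}$ and the supporting hyperplane $H_\beta=\{v:\langle v,\beta\rangle=\sup_Y\langle\mup^0,\beta\rangle\}$ are all independent of $t$, and moreover $\mup^t(Y)\cap H_\beta=\mup^t(F_\beta)$. In particular the support function, hence the closed convex hull, of $\mup^t(Y)$ does not depend on $t$.

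It remains to recover the (in general non--convex) set $\mup^t(Y)$ from its faces, which I would do by induction on $\dim Z$. For $\beta\ne0$ acting non--trivially, $Z^\beta$ is a compact \Keler manifold of smaller dimension on which the centraliser $U^\beta$ acts in a Hamiltonian way with compatible subgroup $G^\beta=K^\beta\cd\exp\liep^\beta$, and $\mup\restr{Z^\beta}$ is its gradient map (using $\mup(Z^\beta)\subset\liep^\beta$); the forms $\om_t\restr{Z^\beta}$ lie in one class and $F_\beta\subset Z^\beta$ is closed and $G^\beta$--stable. Since $U^\beta$ need not be semisimple, the inductive statement has to be phrased up to translation -- which is exactly why the theorem is stated that way -- and it yields that $\mup^t(F_\beta)$ is independent of $t$ up to a translation orthogonal to $\beta$. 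I would then invoke the structure of images of gradient maps of closed invariant sets (see \bg): with $\lia\subset\liep$ maximal abelian and $\cchamber$ a closed Weyl chamber, $\mup^t(Y)=K\cd\bigl(\mup^t(Y)\cap\cchamber\bigr)$, the intersection is a finite union of convex polytopes, and these polytopes are determined by the chamber parts of the $t$--independent data $\{\mup^t(Y\cap Z^\beta)\}_\beta$. Granting that the translations produced by the inductive hypothesis on the various slices are mutually compatible, $\mup^t(Y)$ is independent of $t$ up to a single translation of $\liep$.

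The first two steps are essentially formal; the crux is the last one. The image $\mup(Y)$ need not be convex (it can be, for instance, an annulus), so it is \emph{not} recovered from its convex hull, and the work lies in showing that the rigid ``extremal'' information carried by the submanifolds $Z^\beta$, together with the convexity--in--chambers theorem, reconstructs all of $\mup(Y)$, and that the translations coming from the lower--dimensional slices assemble into one translation of $\liep$. (A minor point suppressed above: the $\beta$ acting trivially on $Z$ span a subspace $\liep_0$ on which $\langle\mup,\cdot\rangle$ is a $t$--independent constant, so the discussion really takes place modulo $\liep_0$.)
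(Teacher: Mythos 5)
There is a genuine gap, and you have located it yourself: the passage from the $t$-independence of the ``extremal'' data $\bigl(\sup_Y\langle\mup^t,\beta\rangle,\ F_\beta,\ \mup^t(F_\beta)\bigr)$ to the $t$-independence of the full, generally non-convex, set $\mup^t(Y)$ is not carried out. Your first two steps are sound and in fact coincide with the paper's key mechanism: the variation formula is Lemma \ref{defmut} (the correction term is $d^cf(\xi_Z)$, which vanishes wherever the relevant fundamental vector field vanishes), and the observation that the momentum map is unchanged on $Z^\beta$ is exactly what the paper exploits at $A$-fixed points. But the proposed induction on $\dim Z$ does not close: knowing each face set $\mup^t(F_\beta)$ only ``up to a translation orthogonal to $\beta$'' does not determine which parts of the interior of the convex hull are occupied by $\mup^t(Y)$, and the ``compatibility of the translations'' you grant yourself is precisely the content that would need proof. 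An annulus and a disk have the same faces in your sense.

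The paper closes this gap by a different and more rigid device. Fix a maximal subalgebra $\lia\subset\liep$, so that $\mup(Y)=K\cdot(\mup(Y)\cap\lia)$ and one only needs to control $\mup(Y)\cap\lia$. By \cite[Corollary 5.8]{heinzner-schuetzdeller} this intersection is a union of closed cells $P(\ga)$ of the arrangement of hyperplanes given by the affine hulls of $\mua(\overline S)$, where $S$ runs over the $A$-isotropy strata; by Atiyah--Guillemin--Sternberg each $\mua(\overline S)$ is the convex hull of the images of the $A$-fixed points in $\overline S$, and there (by your own variation formula) $\mua$ does not change. Hence the whole finite cell complex is independent of $\om$, and the only remaining freedom is \emph{which} cells occur. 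That is a function from $[0,1]$ to the subsets of a fixed finite set, and the continuity of $t\mapsto\mu_{t,\liep}$ forces it to be constant. This combinatorial rigidification plus continuity is the step your proposal is missing; without an analogue of it (some a priori finite, $\om$-independent list of possible shapes for $\mup(Y)\cap\lia$), the support-function and face data you establish cannot reconstruct a non-convex image. Note also that, with the normalization of Lemma \ref{defmut}, the paper obtains equality $\mup(Y)=\tmup(Y)$ on the nose, not merely up to translation, so the caveat about $\liep_0$ and about non-semisimple $U^\beta$ in your inductive scheme is avoidable.
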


Since $Z$ is compact and G is compatible there is a stratification of
$Z$ analogous to the Kirwan stratification, see
\cite{heinzner-schwarz-stoetzel}. This gives a stratification of any closed $G$--invariant subset $Y$ of $Z$, by intersecting the strata in $Z$ with $Y$.
It
follows from Theorem \ref{image-0} that when the momentum map is
properly normalized (see Lemma \ref{defmut}) this stratification does
not depend on the choice of $\om$ in its cohomology class.

When $Z$ is a projective manifold and $\om$ is the pull-back of a
Fubini-Study form via an equivariant embedding of $Z$ in $\PP^N$,
Kirwan \cite[\S 12]{kirwan} proved that the stratification in terms of
a properly normalized $\mu$ can be defined purely in terms of
algebraic geometry.  
In the present note we give a proof of this fact for a general compact
K\"ahler manifold $Z$ in the more general setting of \emph{gradient}
maps for actions of compatible subgroups on closed $G$--invariant
subsets of $Z$.

Another consequence of the above is the following. Assume that $Z$ is
a projective manifold and that $[\om]$ is an integral class. Let
$Y\subset Z$ be a closed $G$-invariant real semi-algebraic subset
whose real algebraic Zariski closure is irreducible. Let $\lia \subset
\liep$ be a maximal subalgebra and let $\lia_+$ be a closed Weyl
chamber in $\lia$.  Then $A(Y)_+:=\mup(Y)\cap\lia_+ $ is convex (see
\cite{heinzner-schuetzdeller}, which deals with the case when $\om$ is
the restriction of a Fubini-Study metric).

{\bfseries \noindent{Acknowledgements.}}  The first two authors are
grateful to the Fakul\-t\"at f\"ur Mathematik of Ruhr-Universit\"at
Bochum for the wonderful hospitality during several visits.  They also
wish to thank the Max-Planck Institut f\"ur Mathematik, Bonn for
excellent conditions provided during their visit at this institution,
where part of this paper was written.

\section{Background}\label{background}

Let $(Z, \om)$ be a compact \Keler manifold and let $U$ be a compact
Lie group. Assume that $U$ acts on $Z$ by holomorphic \Keler
  isometries.  Since $Z$ is compact the $U$-action extends to a
  holomorphic action of the complexified group $U^\C$.  Assume also
that there is a momentum map $\mu: Z \ra \liu^*\cong \liu$, where
$\liu^*$ is identified with $\liu$ using a fixed $U$-invariant scalar
product on $\liu$ that we denote by $\scalo$. We also denote by
$\scalo$ the scalar product on $i\liu$ such that multiplication by $i$
is an isometry of $\liu$ onto $i\liu$.  If $\xi \in \liu$ we denote by
$\xi_Z$ the fundamental vector field on $Z$ and we let $\mu^\xi \in
\cinf(Z)$ be the function $\mu^\xi(z) := \sx \mu(z),\xi\xs$.  That
$\mu$ is the momentum map means that it is $U$-equivariant and that
$d\mu^\xi = i_{\xi_Z} \om$.

For a closed subgroup $G\subset U^\C$ let $K:=G\cap U$ and $\liep:=
\lieg \cap i\liu$.  The group $G$ is called \enf{compatible} if $G =K \cd
\exp \liep$ \cite{heinzner-schwarz-stoetzel,
  heinzner-stoetzel-global}.
In the following we fix a compatible subgroup $G \subset U^\C$.  If
$z \in Z$, let $\mup (z) \in \liep$ denote $-i$ times the component of
$\mu(z)$ in the direction of $i\liep$.  In other words we require that
$\sx \mup (z) , \beta \xs = - \sx \mu(z) , i\beta\xs$ for any $\beta
\in \liep$.  The map
\begin{gather*}
  \mu_\liep : Z \ra \liep
\end{gather*}
is called the \emph{gradient map} (see \cite{heinzner-schwarz-Cartan})
or \emph{restricted momentum map}. Let $\mup^\beta \in \cinf(Z)$ be
the function $ \mup^\beta(z) = \sx \mup(z) , \beta\xs =
\mu^{-i\beta}(z)$.  Let $(\ , \ )$ be the \Keler metric associated to
$\om$, i.e. $(v, w) = \om (v, Jw)$. Then $\beta_Z$ is the gradient of
$\mup^\beta$ with respect to $(\ , \ )$.

\begin{Example}
  (1) For any compact subgroup $K \subset U$, both $K$ and its
  complexification $G=K^\C$ are compatible. In particular $G=U^\C$ is
  a compatible subgroup.  (2) If $G$ is a real form of $U^\C$, then
  $G$ is compatible.  (3) For any $\xi\in i\liu$, the subgroup
  $G=\exp(\R \xi)$ is compatible.
\end{Example}

Next we recall the Stratification Theorem for actions of compatible
subgroups.  Given a maximal subalgebra $\lia \subset \liep$ and a Weyl
chamber $\lia^+ \subset \lia$ define
\begin{gather*}
  \eta_\liep : X \ra \R \qquad  \eta_\liep (x):= \demi ||\mu_\liep(x)||^2 \\
  C_\liep :=\Crit (\eta_\liep) \qquad \mathcal{B}_\liep := \mu_\liep
  (C_\liep) \qquad \mathcal{B}_\liep^+ := \mathcal{B}_\liep \cap
  \lia^+\\
  X(\mu)=\{x\in X:\, \overline{G\cd x} \cap \mup\meno(0) \neq
  \vacuo\}
\end{gather*}
where $X$ is a compact $G$-invariant subset of $Z$. Points lying in
$X(\mu)$ are called \emph{semistable}.  Using semistability and the
function $\eta_\liep$ one can define a stratification of $X$ in the
following way, see \cite{kirwan} and \cite{heinzner-schwarz-stoetzel}.
For $\beta \in \mathcal{B}^{+}_\liep$ 
 set
\begin{gather*}
  X_\rrr : = \{ x\in X: \overline{\exp(\R \beta ) \cd x } \cap (\mu^\beta)\meno(\rrr)\} \\
  X^\beta := \{x\in X: \beta_X(x) = 0\} \\
  X_\rrr^\beta : = X^\beta \cap X_\rrr \\
  X_\rrr^{\beta+} : = \{ x\in X_\rrr:
\lim_{t \to -\infty } \exp(t\beta)\cdot x \text{ exists and  it lies in }
  X^\beta_\rrr
  \}\\
 G^{\beta+} :=\{g\in G : \text{the limit } \lim_{t\to -\infty}\exp(t\beta) g
  \exp(-t\beta) \text{ exists in } G \} .
\end{gather*}
 Set also
\begin{gather*}
  G^\beta = \{g\in G: \Ad g (\beta ) = \beta\}\qquad \liep^\beta : =
  \{\xi \in \liep: [\xi, \beta ] =0 \}.
\end{gather*}
The group $G^\beta = K^\beta \cd \exp(\liep^\beta)$ is a compatible subgroup of $U^\C$ and
the set $\xbp$ is $G^{\beta+}$-invariant. Denote by $\mupbe$ the
composition of $\mup$ with the orthogonal projection $\liep \ra
\liep^\beta$.  Then $\mupbe$ is a gradient map for the
$G^\beta$-action on $\xbp$. We set $\widehat{\mupbe} : = \mupbe -
\beta$. Since $\beta$ lies in the center of $\lieg^\beta$ and since  $G^\beta$ is
a compatible subgroup of $(U^\beta)^\C= (U^\C)^\beta$, it is a
gradient map too.  We let $S^{\beta+}$ denote the set of
$G^\beta$-semistable points in $\xbp$ with respect to
$\widehat{\mupbe}$, i.e.
\begin{gather*}
  S^{\beta+} : = \{x\in \xbp: \overline{G^\beta \cd x} \cap \mupbe
  \meno (\beta) \neq \vacuo\}.
\end{gather*}
The set $S^{\beta+}$ coincides with the set of semistable points
  of the group $G^\beta$ in $\xbp$ after shifting.  By definition the
  $\beta$-stratum is given by $S_\beta : = G \cd S^{\beta+}$.
\begin{namedtheorem}
  [Stratification Theorem] \textnormal{(See
  {\cite[Thm. 7.3]{heinzner-schwarz-stoetzel}})} Assume that
$X$ is a compact $G$-invariant subset of $Z$. Then $
\mathcal{B}_\liep^+ $ is finite and
  \begin{gather*}
    X = \bigsqcup _{\beta \in \mathcal{B}_\liep^+ } S_\beta.
  \end{gather*}
  Moreover
  \begin{gather*}
    \overline{S_\beta} \subset S_\beta \cup \bigcup_{||\ga|| > ||\beta|| } S_\ga.
  \end{gather*}
%
%
\end{namedtheorem}

\section{Proof of Theorem \ref{image-0}}
For a $U$-invariant function $f$ on $Z$ we set
\begin{gather*}
  \omt := \om + dd^c f
\end{gather*}
where $ d^c f : = -2J^* df$.  Since $Z$ is compact and $U$ acts by
holomorphic transformations, any $U$-invariant K\"ahler form $\omt$ in
the K\"ahler class $[\om]$ can be written in this way.  Since
pluriharmonic functions on $Z$ are constant, the function $f$ is
unique up to a constant.
\begin{lemma}\label{defmut}
  If $\mu: Z \ra \liu$ is a momentum map for the $U$-action on $Z$
  with respect to $\om$, then the function $\mut: Z \ra \liu$ defined
  by
  \begin{gather} \label{eq:defmut} \mut^\xi: = \mu^\xi - d^cf(\xi_Z)
  \end{gather}
  is a momentum map for the $U$-action on $Z$ with respect to $\omt$.
\end{lemma}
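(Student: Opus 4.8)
The plan is to verify the two defining properties of a momentum map for the pair $(\mut,\omt)$: for every $\xi\in\liu$ the infinitesimal identity $d\mut^\xi = i_{\xi_Z}\omt$, and the $U$-equivariance of $\mut$. Everything will reduce to the single observation that $\mathcal{L}_{\xi_Z}(d^c f) = 0$ for every $\xi\in\liu$. Indeed, $f$ is $U$-invariant, so $\mathcal{L}_{\xi_Z}f = 0$; and because $U$ acts by \emph{holomorphic} transformations, the flow of $\xi_Z$ commutes with $J$, hence with the operator $d^c = -2J^*d$, so that $\exp(t\xi)^*(d^c f) = d^c(\exp(t\xi)^*f) = d^c f$ for all $t$, and differentiating at $t=0$ gives the stated identity.

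Granting this, the identity $d\mut^\xi = i_{\xi_Z}\omt$ is pure Cartan calculus. Write $\nu^\xi := d^c f(\xi_Z) = i_{\xi_Z}(d^c f) \in \cinf(Z)$, so that $\mut^\xi = \mu^\xi - \nu^\xi$. By Cartan's formula, $0 = \mathcal{L}_{\xi_Z}(d^c f) = d\,i_{\xi_Z}(d^c f) + i_{\xi_Z}\,d(d^c f) = d\nu^\xi + i_{\xi_Z}(dd^c f)$, hence $d\nu^\xi = -\,i_{\xi_Z}(dd^c f)$. Since $\mu$ is a momentum map for $\om$, one has $d\mu^\xi = i_{\xi_Z}\om$, and therefore $d\mut^\xi = d\mu^\xi - d\nu^\xi = i_{\xi_Z}\om + i_{\xi_Z}(dd^c f) = i_{\xi_Z}(\om + dd^c f) = i_{\xi_Z}\omt$, as required.

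For equivariance I would show that the correction map $\nu\colon Z\ra\liu$ determined by $\sx\nu(z),\xi\xs = \nu^\xi(z)$ is itself equivariant, i.e. $\nu(u\cd z) = \Ad(u)\nu(z)$; then $\mut = \mu - \nu$ inherits equivariance from $\mu$, using that $\scalo$ is $\Ad$-invariant. This rests on the same mechanism: fixing $u\in U$ and writing $\Phi_u$ for the diffeomorphism $z\mapsto u\cd z$, from $(\Phi_u)_*(\xi_Z|_z) = (\Ad(u)\xi)_Z|_{u\cd z}$ one gets $\nu^{\Ad(u)\xi}(u\cd z) = (d^c f)_{u\cd z}\big((\Phi_u)_*\xi_Z\big) = (\Phi_u^*\,d^c f)_z(\xi_Z|_z)$, and since $\Phi_u^*(d^c f) = d^c(\Phi_u^* f) = d^c f$ by $U$-invariance of $f$ and holomorphicity of $\Phi_u$, this equals $(d^c f)_z(\xi_Z|_z) = \nu^\xi(z)$, which is exactly the claim. (No connectedness of $U$ is needed, since this is checked directly for each $u\in U$.) I do not expect a genuine obstacle here: the one substantive point is the identity $\mathcal{L}_{\xi_Z}(d^c f) = 0$ — this is precisely where the $U$-invariance of $f$ and the holomorphicity of the action are used, and it is what makes the naive correction $\mut^\xi = \mu^\xi - d^c f(\xi_Z)$ work.
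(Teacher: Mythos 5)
Your argument is correct and is essentially the paper's own proof, which likewise reduces everything to Cartan's formula together with the identity $L_{\xi_Z} d^c f = d^c L_{\xi_Z} f = 0$, derived from the holomorphicity of the action and the $U$-invariance of $f$. You merely spell out the details (including the equivariance check, which the paper leaves implicit), so there is nothing to correct.
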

\begin{proof}

That $\mut$ is a momentum map follows from Cartan formula using that
  $L_{\xi_Z} d^c f = d^c L_{\xi_Z} f = 0$.  This in turn follows from
  the assumption that the action of $U$ is holomorphic and $f$ is
  $U$-invariant.
\end{proof}



A more precise version of Theorem \ref{image-0} is the following.
\begin{teo} \label{image} For any closed $G$-stable subset $Y \subset
  Z$ we have $\mup (Y) = \tmup(Y)$.
\end{teo}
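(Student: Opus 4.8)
The plan is to establish the inclusion $\mup(Y)\subseteq\tmup(Y)$ for every configuration $(Z,\om,U,G,Y)$ as above; the reverse inclusion then follows by symmetry, since $\om=\omt+dd^c(-f)$ and, by Lemma~\ref{defmut} applied to the $U$-invariant function $-f$, the momentum map for $\om$ is recovered from the one for $\omt$ by the same formula~\eqref{eq:defmut}. I would argue by induction on $\dim_\C Z$, the zero-dimensional case being trivial, and in the inductive step I would be free to use the equality $\mup=\tmup$ for all data of strictly smaller dimension. (The reduction below drops to centralizers $U^\beta$ of torus elements, which need not be semisimple, so the induction should be set up for a general compact group; the argument uses nothing beyond the hypotheses of \S\ref{background}, and one may assume $Z$ connected.)

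\emph{Step 1: support functions.} First I would show that $\max_{y\in Y}\mup^\beta(y)=\max_{y\in Y}\tmup^\beta(y)$ for every $\beta\in\liep$. The function $t\mapsto\mup^\beta(\exp(t\beta)x)$ is nondecreasing, its derivative being $\|\beta_Z\|^2$ for the metric of $\om$; since $Z$ is compact the limit $x_\infty:=\lim_{t\to+\infty}\exp(t\beta)x$ exists, lies in the closed $G$-stable set $Y$, and satisfies $\beta_Z(x_\infty)=0$, so $\mup^\beta(x_\infty)\ge\mup^\beta(x)$. Hence the maximum of $\mup^\beta$ over $Y$ is attained at some $z$ with $\beta_Z(z)=0$. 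The vector fields $(-i\beta)_Z$ and $\beta_Z$ are related by the complex structure and so vanish together; thus, by~\eqref{eq:defmut}, $\tmup^\beta(z)=\tilde\mu^{-i\beta}(z)=\mu^{-i\beta}(z)=\mup^\beta(z)$, whence $\max_Y\tmup^\beta\ge\max_Y\mup^\beta$, and the reverse inequality is the symmetric statement. Consequently $P:=\conv\mup(Y)=\conv\tmup(Y)$, and $\mup(Y)$ and $\tmup(Y)$ have the same affine hull.

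\emph{Step 2: the relative boundary of $P$.} Let $v\in\mup(Y)$ lie on a proper face $F=P\cap\{\langle\cdot,\beta\rangle=c\}$ of $P$, where $\beta\ne0$ is the supporting direction taken in the direction space of $\aff P$ (so that $\beta$ acts nontrivially on $Z$) and $c=\max_Y\mup^\beta$; write $v=\mup(y)$. As in Step~1, $\mup^\beta(y)=c$ being the maximum of the nondecreasing function $t\mapsto\mup^\beta(\exp(t\beta)y)$ over the whole orbit forces $\|\beta_Z(y)\|^2=0$, so $y\in Z^\beta:=\{z:\beta_Z(z)=0\}$. On $Z^\beta$ the function $\mup^\beta$ is locally constant (its $\om$-gradient $\beta_Z$ vanishes) and $\mup$ takes values in $\liep^\beta$. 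Let $W$ be the union of those components of $Z^\beta$ on which $\mup^\beta\equiv c$; by Step~1 one gets the same $W$ from $\tmup^\beta$. Then $W$ is a compact \Keler submanifold with $\dim_\C W<\dim_\C Z$ on which $G^\beta$ acts, $\omt|_W=\om|_W+dd^c(f|_W)$ with $f|_W$ invariant under $U^\beta$, the gradient map of $(W,G^\beta,\om|_W)$ is $\mupbe|_W=\mup|_W$ and likewise for $\omt$, and $\mup(Y)\cap F=\mup(Y\cap W)$, $\tmup(Y)\cap F=\tmup(Y\cap W)$. By the inductive hypothesis (with the $W$-version of Lemma~\ref{defmut}), $\mup(Y\cap W)=\tmup(Y\cap W)$. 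Thus $\mup(Y)$ and $\tmup(Y)$ agree on the relative boundary of $P$, the union of its proper faces.

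\emph{Step 3: the relative interior of $P$.} This is the step I expect to be the main obstacle. It remains to see that every $v=\mup(y)$ with $v\in\relint P$ lies in $\tmup(Y)$, and I would argue by contradiction. Put $m:=\min_{z\in Y}\tfrac12\|\tmup(z)-v\|^2$, attained at $y_*$, and set $v_*:=\tmup(y_*)$, $\beta:=v_*-v$; assume $m>0$, so $\beta\ne0$. Letting $g(t):=\tfrac12\|\tmup(\exp(-t\beta)y_*)-v\|^2$, a direct computation gives $g'(0)=-\|\beta_Z(y_*)\|^2$ (norm for the metric of $\omt$), so minimality forces $\beta_Z(y_*)=0$; then $y_*\in Z^\beta$, $v_*=\tmup(y_*)\in\liep^\beta$, hence $v=v_*-\beta\in\liep^\beta$, and $\mup^\beta=\tmup^\beta$ on $Z^\beta$. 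The heart of the proof will be to manufacture from this a point of $Y$ on which $\tmup$ equals $v$, contradicting $m>0$. I expect this to require flowing $y$ in the $\beta$-direction to raise $\mup^\beta$ to the level of the component of $Z^\beta$ containing $y_*$, and then combining the inductive hypothesis on a submanifold of the type $W$ of Step~2 with the structure of the unstable directions encoded in the Stratification Theorem — the sets $G^{\beta+}$ and $X_\rrr^{\beta+}$ and the shift by $\beta$ in the definition of $S^{\beta+}$, together with the analysis of the stratum of semistable points — so as to keep the components of $\mup$ transverse to $\beta$ under control. Since $\mup(Y)=\bigcup_{y\in Y}\mup(\overline{G\cdot y})$, one may reduce Step~3 to the case $Y=\overline{G\cdot y}$, which makes the orbit structure available; controlling the transverse components and the semistable behaviour are the points that will need the most care.
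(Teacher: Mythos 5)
Your Steps 1 and 2 are essentially sound: the monotonicity of $t\mapsto\mup^\beta(\exp(t\beta)x)$ and the identity $\tmup^\beta=\mup^\beta$ on $\{\beta_Z=0\}$ are correct, and they do give $\conv\mup(Y)=\conv\tmup(Y)$ together with agreement of the two images on the exposed faces of the hull. But these steps only control the convex hull $P$ and the trace of the image on the relative boundary of $P$. The set $\mup(Y)$ is in general a non-convex, $\Ad(K)$-invariant set almost all of which sits in $\relint P$, so after Step 2 essentially the entire theorem is still open, and Step 3 --- which you yourself flag as the main obstacle --- is not a proof but a list of tools you hope will suffice. The contradiction argument you set up (minimize $\frac12\|\tmup-v\|^2$ over $Y$, obtain a minimizer $y_*$ with $\beta_Z(y_*)=0$ for $\beta=\tmup(y_*)-v$) correctly produces a shifted critical point, but nothing in Steps 1--2 links the hypothesis $v\in\mup(Y)$ to the behaviour of $\tmup$ near $y_*$; without an additional structural input about how $\mup(Y)$ sits inside $\relint P$, the desired point of $Y\cap\tmup^{-1}(v)$ cannot be manufactured. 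This is a genuine gap, not a missing detail.

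The missing structural input is exactly what the paper imports from \cite{heinzner-schuetzdeller}: after restricting to a maximal subalgebra $\lia\subset\liep$ and setting $A=\exp\lia$, Corollary 5.8 of that paper says that $\mup(Y)\cap\lia$ is a union of closed chambers $P(\ga)$ of a fixed hyperplane arrangement, the hyperplanes being the affine hulls of the polytopes $\mua(\overline S)$ attached to the $A$-strata $S$. The vertices of these polytopes are images of fixed points of the Zariski closure of $A$ (Atiyah--Guillemin--Sternberg), where $\mu$ and $\mut$ coincide by the very observation you make in Step 1; hence the arrangement and its chambers are independent of $\om$, and a continuity argument along the path $\om_t=\om+t\,dd^cf$ shows that the finite set of chambers occurring in $\mu_{t,\liep}(Y)\cap\lia$ is constant in $t$. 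Finally $\mup(Y)=K\cdot(\mup(Y)\cap\lia)$. Your induction-on-dimension scheme might conceivably be pushed through, but it would have to reprove something equivalent to this chamber structure of $\mup(Y)\cap\lia$ in the interior of $P$; as written, Step 3 does not do so.
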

\begin{proof}
  Let $\lia \subset \liep$ be a maximal subalgebra and set $A:= \exp
  \lia$. The group $A$ is a compatible subgroup. Let $\mua: Z \ra
  \lia$ be the restricted gradient map.  Any connected subgroup
  $B\subset A$ is compatible. Given such a $B$, set $Z^{(B)} := \{
  z\in Z: A_z = B\}$.  A connected component $S$ of $Z^{(B)}$ will be
  called an $A$-stratum of type $\lieb$. For a given $S$ let $C$
  denote the connected component of $Z^B$ containing $S$. Then $C$ is
  a complex submanifold of $Z$ and the
  Slice Theorem (see Theorem 14.10 and 14.21 in
  \cite{heinzner-schwarz-Cartan} or Theorem 2.2 in
  \cite{heinzner-schuetzdeller}) applied to the $A$-action on $C$
  shows that $S$ is open and dense in $C$.
%

  Let $A^c$ be the Zariski closure of $A$ in $U^\C$. The group $A^c$
  is a compatible subgroup of $U^\C$, $A^c \cap U=T$ is a torus and
  $A^c = T \exp(i\liet)$, where $ \liet$ denotes the Lie algebra of
  $T$. Moreover $\overline S$ is $A^c$-stable \cite[Lemma 3.3 (1)]
  {heinzner-schuetzdeller}.  Denote by $\mu_{\mathfrak t}:Z \lra
  \mathfrak t$ the momentum map obtained by projecting $\mu:Z \lra
  \mathfrak u$ to $\mathfrak t$, and denote by $\Pi : i \liet \ra
  \lia$ the orthogonal projection.  Then $\mu_{\mathfrak a} = \Pi
  \circ i \mu_\liet$ and $\mu_{\lia}(\overline S)=\Pi (i\mu_{\mathfrak
    t}(\overline S))$.  By the convexity theorem of
  Atiyah-Guillemin-Sternberg $\mu_{\mathfrak t}(\overline S)$ is a
  convex polytope and its vertices are images of points fixed by
  $A^c$. It follows that $\mu_{\mathfrak a}(\overline S )$ is a convex
  polytope as well.  Since $\Pi$ is linear, any vertex of
  $\mu_{\mathfrak a}(\overline S )$ is the projection of at least one
  vertex of $i\mu_{\mathfrak t}(\overline S)$.  Therefore
  $\mu_{\mathfrak a}(\overline S )$ is the convex hull of
  $\mu_\lia(\overline{S}^A)$.
%
%
%
%
  Now we use Lemma \ref{defmut}: if $x \in \overline{S}^A$, then
  $\xi_Z(x) = 0$, so $\mut^\xi(x) = \mu^\xi(x)$, for any $\xi \in
  \lia$. Therefore $\mut_\lia(x) = \mu_\lia(x)$ for every $A$-fixed
  point $x$.  It follows that both $\mu_{\mathfrak a} (\overline S)$
  and the affine subspace spanned by $\mua(S)$ do not depend on the
  choice of the \Keler form $\om$.


  Let $\Sigma$ be the collection of affine hyperplanes of $\lia$ that
  are affine hulls of $\mu_\lia(\overline{S})$ for some $A$-stratum
  $S$.  Set $P:= \mua(Z)$ and
  \begin{gather*}
    P_0:= P \setminus \bigcup_{H \in \Sigma } P\cap H.
  \end{gather*}
  (See \cite{heinzner-schuetzdeller}). The set $P_0$ is an open subset
  of $\lia$. Let $C(P_0)$ denote the set of its connected
  components. This is a finite set.  For $\ga \in C(P_0)$ let $P(\ga)$
  be the closure of the connected component $\ga$.  Then $P(\ga)$ is a
  convex polytope. Since both $P$ and the hyperplanes $H$ are
  independent of $\om$, also the polytopes $P(\ga)$ do not depend
    on $\om$.  By \cite[Corollary 5.8]{heinzner-schuetzdeller}
  \begin{gather*}
    \mup(Y)\cap \lia = \bigcup_{\ga \in F(\om)}P(\ga),
  \end{gather*}
  where $F(\om) \subset \Gamma$ is some subset of $C(P_0)$.  One
    can join $\om$ to $\omt$ continuously, e.g. by $\om_t: = \om + t
    dd^c f$.  Then $\mut_t : = \mu - t d^c f ( \cd _Z)$ also depends
    continuously on $t$. So $P(\ga)\subset \mup(Y)\cap \lia $ if and
    only if $P(\ga)\subset \mu_{t,\liep}(Y)\cap \lia $. Therefore
    $F(\om_t)$ is constant and the same is true of $\mup(Y)\cap
    \lia$. This implies $\mup(Y)=K(\mup(Y)\cap \lia)$. Hence $\mut(Y)
    = \mutp(Y)$.
\end{proof}

\begin{cor} \label{image-Z} Assume that $Z$ is connected and let
    $\om$ and $\omt$ be two cohomologous \Keler forms with momentum
    maps $\mu$ and $\mut$ respectively as in Lemma \ref{defmut}. Then
  $\mut$ is the unique momentum map such that $\mu(Z) = \mut(Z)$.
\end{cor}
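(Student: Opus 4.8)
The plan is to reduce the statement to Theorem \ref{image}. First I would specialize to $G=U^\C$, which is compatible with $K=U$ and $\liep=i\liu$; for this choice the gradient map $\mup\colon Z\to i\liu$ is, under the canonical isometric identification $i\liu\cong\liu$ given by $\xi\mapsto i\xi$, the momentum map $\mu$ itself (for $\beta=i\eta$ the defining relation $\sx\mup(z),\beta\xs=-\sx\mu(z),i\beta\xs$ becomes $\sx\mup(z),i\eta\xs=\sx\mu(z),\eta\xs$), and likewise the gradient map attached to $\omt$ together with the momentum map $\mut$ of Lemma \ref{defmut} is identified with $\mut$. Since $Z$ is closed and $U^\C$-stable, Theorem \ref{image} applied to $Y=Z$ gives $\mup(Z)=\tmup(Z)$, that is, $\mu(Z)=\mut(Z)$. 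As $\mut$ is a momentum map for $\omt$ by Lemma \ref{defmut}, this shows that $\mut$ has the asserted property, and only uniqueness remains.

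Second, I would take an arbitrary momentum map $\mu'$ for the $U$-action on $(Z,\omt)$ with $\mu'(Z)=\mu(Z)$ and show $\mu'=\mut$. For every $\xi\in\liu$ one has $d(\mu')^\xi=i_{\xi_Z}\omt=d\mut^\xi$, so $(\mu'-\mut)^\xi$ is locally constant; since $Z$ is connected it is constant, whence $\mu'=\mut+c$ for some $c\in\liu$, and comparing the $U$-equivariance of $\mu'$ and $\mut$ forces $\Ad^*_u c=c$ for all $u\in U$, i.e. $c$ lies in the center of $\liu$ (so for $U$ semisimple one is already done).

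Third, I would combine the two steps: $\mu(Z)=\mu'(Z)=\mut(Z)+c=\mu(Z)+c$, so the set $\mu(Z)\subset\liu$ is invariant under translation by $c$; iterating, $p+nc\in\mu(Z)$ for every $p\in\mu(Z)$ and every integer $n$, which is impossible when $c\neq 0$ because $\mu(Z)$, being the continuous image of the compact space $Z$, is bounded. Hence $c=0$ and $\mu'=\mut$.

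I do not expect a genuine obstacle here. The one point that needs a little care is making precise the identification of the $U^\C$-gradient map with the full momentum map, so that Theorem \ref{image} applies verbatim to $Y=Z$; beyond that the argument is elementary, with connectedness of $Z$ used exactly to promote ``locally constant'' to ``constant'' and compactness of $Z$ used to rule out a nonzero translation.
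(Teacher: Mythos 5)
Your proposal is correct and follows essentially the same route as the paper: the paper also deduces $\mu(Z)=\mut(Z)$ as a special case of Theorem \ref{image} and handles uniqueness by noting that two momentum maps for $\omt$ differ by a central element, which cannot change the (bounded) image unless it is zero. You merely spell out the details the paper leaves implicit, namely the identification of the gradient map for $G=U^\C$ with $\mu$ and the translation-invariance argument ruling out a nonzero central shift.
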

\begin{proof}
  Since two momentum maps with respect to $\omt$ differ by addition of
  an element of the center of $\liu$, it is clear that there is at
  most one such map with the image equal to $\mu(Z)$.  To complete the
  proof it is therefore enough to check that $\mut(Z)=\mu(Z)$. This is
  a special case of the previous theorem.
\end{proof}

\begin{teo} \label{strato} Let $\om$ and $\omt$ be two cohomologous
  \Keler forms on $Z$, with momentum maps $\mu$ and $\mut$
  respectively as in Lemma \ref{defmut}.
  Then the set $\mathcal{B}_\liep^+$ is the same for both momentum
  maps and the two stratifications of $X$ coincide.
\end{teo}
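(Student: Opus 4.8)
The plan is to go through the definitions of $\mathcal{B}_\liep^+$ and of the strata $S_\beta$ and to check that every set occurring there depends only on the K\"ahler class $[\om]$. The ingredients are the $G$--action on $Z$ (which does not depend on $\om$), the numbers $\rrr$ (which depend only on $\beta$), and the gradient map restricted to various closed invariant subsets; since Theorem \ref{image} and Lemma \ref{defmut} hold for an arbitrary compatible subgroup, the idea is to apply them not only to $G$ and $X$ but also to the compatible subgroups $G^\beta$ and $\exp(\R\beta)$ and to suitable orbit closures. Throughout, $\mut,\mutp$ and $\widetilde{S}^{\beta+},\widetilde{X}_\rrr,\widetilde{\mathcal{B}}_\liep,\dots$ denote the objects attached to $\omt$ exactly as $\mu,\mup$ and $S^{\beta+},X_\rrr,\mathcal{B}_\liep,\dots$ are attached to $\om$.

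\emph{Step 1: $\mathcal{B}_\liep^+=\widetilde{\mathcal{B}}_\liep^+$.} Since $\eta_\liep=\demi||\mup||^2$, one has $\grad\eta_\liep(x)=(\mup(x))_Z(x)$ (the fundamental vector field of $\mup(x)\in\liep$, evaluated at $x$), so $C_\liep=\{x\in X:(\mup(x))_Z(x)=0\}$; hence $\beta\in\mathcal{B}_\liep$ if and only if there is $x\in X$ with $\mup(x)=\beta$ and $\beta_Z(x)=0$. If $\beta_Z(x)=0$ then $x$ is fixed by the one--parameter subgroup of $U$ generated by $-i\beta$, so $[\beta,\mu(x)]=0$ by $U$--equivariance of $\mu$, and therefore $\mup(x)\in\liep^\beta$ (see \cite{heinzner-schwarz-stoetzel}), i.e.\ $\mup(x)=\mupbe(x)$. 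Thus $\mathcal{B}_\liep=\{\beta:\beta\in\mupbe(X^\beta)\}$, where $X^\beta=\{x\in X:\beta_Z(x)=0\}$ is closed and $G^\beta$--stable. Since $G^\beta$ is a compatible subgroup of $U^\C$ whose associated gradient map is $\mupbe$, Theorem \ref{image} applied with $G^\beta$ in place of $G$ and $X^\beta$ in place of $Y$ gives $\mupbe(X^\beta)=\mut_{\liep^\beta}(X^\beta)$; combined with the description above, this yields $\mathcal{B}_\liep=\widetilde{\mathcal{B}}_\liep$, hence $\mathcal{B}_\liep^+=\widetilde{\mathcal{B}}_\liep^+$.

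\emph{Step 2: $S_\beta=\widetilde{S}_\beta$ for every $\beta\in\mathcal{B}_\liep^+$.} As $S_\beta=G\cd S^{\beta+}$, it suffices to prove $S^{\beta+}=\widetilde{S}^{\beta+}$, and for this one checks that $X_\rrr,X^\beta,X^\beta_\rrr,\xbp$ are $\om$--independent and that the condition $\overline{G^\beta\cd x}\cap\mupbe\meno(\beta)\ne\vacuo$ is $\om$--independent. The set $X^\beta$ depends on the $G$--action only. For $X_\rrr$ (assuming $\beta\ne0$; for $\beta=0$ trivially $X_\rrr=X$): the orbit closure $\overline{\exp(\R\beta)\cd x}$ is a closed subset of $Z$ stable under the compatible subgroup $\exp(\R\beta)$ (compatible since $\beta\in i\liu$), whose gradient map is $z\mapsto\frac{\mup^\beta(z)}{\rrr}\,\beta$; hence Theorem \ref{image}, applied to this subgroup and this orbit closure, gives $\mup^\beta(\overline{\exp(\R\beta)\cd x})=\tmup^\beta(\overline{\exp(\R\beta)\cd x})$ as subsets of $\R$, so $\rrr$ belongs to the first set if and only if it belongs to the second; that is, $x\in X_\rrr$ if and only if $x\in\widetilde{X}_\rrr$. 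Consequently $X^\beta_\rrr=X^\beta\cap X_\rrr$ is $\om$--independent, and so is $\xbp$, because $\lim_{t\to-\infty}\exp(t\beta)\cd x$, when it exists, is determined by the $G$--action alone and the requirement that it lie in $X^\beta_\rrr$ is $\om$--independent. Finally, for $x\in\xbp$ the orbit closure $\overline{G^\beta\cd x}$ is closed and $G^\beta$--stable, so Theorem \ref{image} applied to $G^\beta$ and $\overline{G^\beta\cd x}$ gives $\mupbe(\overline{G^\beta\cd x})=\mut_{\liep^\beta}(\overline{G^\beta\cd x})$, whence $\beta$ belongs to the first set if and only if it belongs to the second; that is, $x\in S^{\beta+}$ if and only if $x\in\widetilde{S}^{\beta+}$. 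Therefore $S^{\beta+}=\widetilde{S}^{\beta+}$, so $S_\beta=\widetilde{S}_\beta$, and together with Step 1 this shows that the two stratifications of $X$ coincide.

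The substantive point is the use of Theorem \ref{image} for compatible subgroups other than $G$ (namely $G^\beta$ and $\exp(\R\beta)$) and for closed invariant subsets other than $X$ (namely $X^\beta$ and orbit closures): this is what makes the $\om$--independence of $X_\rrr$ and of the semistability condition defining $S^{\beta+}$ transparent, since it is not visible pointwise --- indeed $\mup^\beta=\mu^{-i\beta}$ differs from $\tmup^\beta$ by $d^c f((-i\beta)_Z)$ --- and only emerges after passing to the relevant orbit closures. The remaining verifications, namely that $G^\beta$ and $\exp(\R\beta)$ are compatible in $U^\C$, that $f$ is invariant under the relevant groups, and that the gradient maps for $\omt$ attached to these subgroups are those induced by the normalisation of Lemma \ref{defmut}, are routine.
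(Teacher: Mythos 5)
Your proof is correct, but it takes a genuinely different route from the paper's. The paper does not unwind the definitions of $X_\rrr$, $\xbp$ and $S^{\beta+}$ at all: it invokes \cite[Corollary 7.6]{heinzner-schwarz-stoetzel}, which re-expresses both $\mathcal{B}_\liep$ and the strata $S_\beta$ purely in terms of the two quantities $\inf_{G\cd x}\eta_\liep$ and $\mup(\overline{G\cd x})$ attached to a $G$-orbit closure; a single application of Theorem \ref{image} to the closed $G$-stable sets $\overline{G\cd x}$ then gives $\mup(\overline{G\cd x})=\tmup(\overline{G\cd x})$, hence also equality of the infima, and the proof is finished in a few lines. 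You instead verify the $\om$-independence of each ingredient of the original definition of the stratification, which forces you to apply Theorem \ref{image} to the auxiliary compatible subgroups $\exp(\R\beta)$ and $G^\beta$ acting on $X^\beta$, $\overline{\exp(\R\beta)\cd x}$ and $\overline{G^\beta\cd x}$. This is legitimate, since Theorem \ref{image} holds for an arbitrary compatible subgroup, and your observation that the $\om$-independence is invisible pointwise and only emerges after passing to orbit closures is exactly the right one --- it is the same mechanism the paper exploits, just applied to smaller groups. The price is extra bookkeeping: you need $\Crit(\eta_\liep)=\{x:(\mup(x))_Z(x)=0\}$, the inclusion $\mup(X^\beta)\subset\liep^\beta$, and the compatibility of the normalization of Lemma \ref{defmut} with passage to subgroups, all of which you correctly flag as facts from \cite{heinzner-schwarz-stoetzel} or routine checks. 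What you gain is independence from \cite[Corollary 7.6]{heinzner-schwarz-stoetzel} and a proof that tracks the stratification through its actual construction; what the paper's argument buys is brevity and the avoidance of any discussion of $X_\rrr$, $\xbp$ and the limit sets.
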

\begin{proof}
  By \cite[Corollary 7.6]{heinzner-schwarz-stoetzel}
  \begin{gather}\label{BB}
    \mathcal{B}_\liep = \{\beta \in \liep: \text{ there exists } x\in
    X: \frac{ ||\beta||^2}{2} = \inf_{G\cd x} \eta_\liep \text { and }
    \beta \in \mup (\overline{G\cd x}) \}.
  \end{gather}
  Moreover for $\beta \in \mathcal{B}_\liep$
  \begin{gather}\label{SB}
    S_\beta = \{ x\in X: \frac{||\beta||^2}{2} = \inf_{G\cd x}
    \eta_\liep \text { and } \beta \in \mup (\overline{G\cd x})\}.
  \end{gather}
  For any point $x\in X$, the set $\overline{G\cd x} $ is closed and
  $G$-invariant. Hence by Theorem \ref{image} $\mup( \overline{G\cd
    x}) = \mutp( \overline{G\cd x})$.  From this it follows that
  $\inf_{G\cd x} \eta_\liep = \inf_{G\cd x} \tilde{\eta}_\liep$, where
  $ \tilde{\eta}_\liep:= ||\mu_\liep||^2/2$.  The result follows from
  \eqref{BB} and \eqref{SB}.
\end{proof}

%
From the above we obtain the following generalization.
\begin{cor} \label{convexity} If $Z$ is a complex projective manifold,
  $U$ is a compact connected semisimple Lie group acting on $Z$, $\om$
  is a $U$-invariant Hodge metric and $Y\subset Z$ is a closed
  $G$-invariant real semi-algebraic subset whose real algebraic
  Zariski closure is irreducible, then $A(Y)_+$ is convex.  Moreover
  if $G$ is semisimple, then $X(\mu)$ is dense (if it is nonempty).
\end{cor}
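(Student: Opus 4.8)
The plan is to deduce both assertions from the results already established, in particular from Theorem \ref{image} (equivalently, from the fact that $A(Y)_+ = \mup(Y)\cap\lia_+$ is independent of the choice of \Keler form in a fixed cohomology class, after proper normalisation of the momentum map). First I would observe that a $U$-invariant Hodge metric $\om$ represents an integral class $[\om]\in H^2(Z,\Zeta)$, so a suitable positive multiple $N[\om]$ is very ample and the corresponding linear system gives a $U$-equivariant embedding $\iota\colon Z \hookrightarrow \PP^M$; the action of $U$ on $\PP^M$ lifts to a unitary action on $\C^{M+1}$, and $U^\C\subset\Gl(M+1,\C)$, so $G$ sits inside $\Gl(M+1,\C)$ as a compatible subgroup. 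Pulling back the Fubini-Study form $\omfs$ we obtain a $U$-invariant \Keler form $\iota^*\omfs$ in the class $N[\om]$, with gradient map the restriction of the standard Fubini-Study gradient map. For this particular metric, the convexity of $\mup(Y)\cap\lia_+$ for $Y$ a closed $G$-invariant real semi-algebraic set with irreducible real Zariski closure is exactly the theorem of Heinzner-Sch\"utzdeller \cite{heinzner-schuetzdeller}.

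The second step is to transfer this back to the given metric $\om$. Rescaling multiplies the gradient map by $N$, which preserves convexity and the chamber structure, so it suffices to compare $\iota^*\omfs$ with $N\om$: these are two cohomologous $U$-invariant \Keler forms. By Theorem \ref{image} (applied to the closed $G$-invariant set $Y$, or rather its real Zariski closure, which is still closed and $G$-invariant), the images $\mup(Y)$ agree up to translation, and by the argument in the proof of Theorem \ref{image} the intersections with $\lia_+$ agree on the nose once the momentum maps are normalised as in Lemma \ref{defmut}. Hence $A(Y)_+$ for $\om$ equals (a scalar multiple of) $A(Y)_+$ for $\iota^*\omfs$, which is convex. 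One small point to check here is that the real semi-algebraic hypothesis and the irreducibility of the real Zariski closure are intrinsic to $Y$ and do not interact with the choice of metric — they do not, so the reduction is clean.

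For the last sentence, suppose $G$ is semisimple and $X(\mu)\neq\vacuo$. In the projective setting $X(\mu)$ is the set of semistable points, which is Zariski open in $Z$ (by the Hilbert-Mumford / Kempf-Ness description of $\mup\meno(0)$, or directly from the Stratification Theorem: $X(\mu) = S_0$ is the open stratum, and by the closure relation $\overline{S_\beta}\subset S_\beta\cup\bigcup_{\|\ga\|>\|\beta\|}S_\ga$ every other stratum has strictly larger $\eta_\liep$-value and hence cannot contain $S_0$ in its closure, so $X\setminus X(\mu) = \bigcup_{\beta\neq 0}S_\beta$ is closed). A nonempty Zariski open subset of the irreducible variety $Z$ is dense, which gives the claim; if $Z$ is not assumed irreducible one works component by component, noting that $U$ connected permutes the components trivially. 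The main obstacle is really in the first two steps: one must make sure that the passage to the projective embedding genuinely lands one in the hypotheses of \cite{heinzner-schuetzdeller} — in particular that $\iota^*\omfs$ is a metric to which that paper applies and that $G$, viewed inside $\Gl(M+1,\C)$, is the right compatible subgroup — after which everything follows formally from Theorem \ref{image}.
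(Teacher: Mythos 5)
Your argument for the convexity statement is essentially the paper's own proof: both reduce to the Heinzner--Sch\"utzdeller convexity theorem for a Fubini--Study gradient map by using the ample line bundle to produce an equivariant projective embedding, noting that rescaling the K\"ahler form rescales the gradient map (which preserves convexity), and then invoking Theorem \ref{image} to identify $\mup(Y)$ for $\om$ with the image under the Fubini--Study gradient map. (The paper rescales $\om_{FS}$ by $1/m$ rather than $[\om]$ by $N$; this is the same computation.)

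For the density statement, however, your argument has a gap. From the Stratification Theorem you correctly get that $X(\mu)=S_0$ is \emph{open} in $X$: its complement is the union of the strata $S_\beta$ with $\beta\neq 0$, which is closed by the closure relations. But openness does not imply density, and the claim that $X(\mu)$ is Zariski open is not justified in this setting: $G$ is an arbitrary compatible --- in particular possibly real --- subgroup acting on a closed $G$-invariant real semi-algebraic set, and the strata are defined via the gradient map, not algebraically, so the Hilbert--Mumford/Kempf--Ness description of the semistable locus as a Zariski open subset of a complex projective variety does not apply. If density followed formally from the stratification it would hold for every invariant metric without any transfer argument, which is not what is being claimed. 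The intended route is the same transfer you already used for convexity: density of the semistable set under the semisimplicity hypothesis is proved in \cite{heinzner-schuetzdeller} for the Fubini--Study gradient map, and Theorem \ref{image} (together with Corollary \ref{image-Z} and Theorem \ref{strato}, which give $X(\mu)=X(\mut)$) carries it over to $\mu$.
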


\begin{proof}
  By assumption there is a very ample line bundle $L\ra Z$ such that
  $[\om]=2\pi \chern_1(L) / m$ for an interger $m>0$. Let $\om_{FS}$
  be a $U$-invariant Fubini-Study metric on $\PP(H^0(Z,L)^*)$. Let
  $\mu_{FS}$ be the moment map with respect to $\om_{FS}\restr{Z}$.
  In \cite{heinzner-schuetzdeller} the convexity theorem has been
  proved for $\mu_{FS}$.  A rescaling in the symplectic form yields a
  corresponding rescaling in the momentum map.  Therefore the
  convexity theorem also holds for the momentum map $\mut$ relative to
  the symplectic form $\omt:=\om_{FS}/m$.  So it holds also for $\mu$,
  since $\mu_\liep(Y)=\mut_\liep(Y)$ by Theorem \ref{image}.  The
  proof of the last statement is similar: see
  \cite{heinzner-schuetzdeller} and Corollary \ref{image-Z}.
\end{proof}
\begin{cor}
  Under the same assumptions, any local minimum of $|\mup|^2$ is a
  global minimum.
\end{cor}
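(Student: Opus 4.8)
The plan is to combine the convexity of $A(Y)_+$ from Corollary \ref{convexity} with the local structure of the gradient map near a critical point. Write $\eta_\liep=\|\mup\|^2/2$, so that a local minimum of $|\mup|^2$ is the same as a local minimum of $\eta_\liep$; since $\mup(Y)$ is compact, $\eta_\liep|_Y$ attains a global minimum $m=(\min_{v\in\mup(Y)}\|v\|)^2/2$, and it suffices to prove that every local minimum $z_0$ of $\eta_\liep|_Y$ satisfies $\|\mup(z_0)\|=\min_{v\in\mup(Y)}\|v\|$.

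First I would observe that such a $z_0$ is automatically a critical point of $\eta_\liep$. Indeed, $\grad\eta_\liep(z)$ equals the value at $z$ of the fundamental vector field of $\mup(z)$ (section \ref{background}, \cite{heinzner-schwarz-stoetzel}), so the curve $t\mapsto\exp(t\mup(z_0))\cdot z_0$ stays in the orbit $G\cdot z_0\subset Y$ and has velocity $\grad\eta_\liep(z_0)$ at $t=0$; as $t=0$ is a local minimum of $\eta_\liep$ along this curve, the derivative vanishes there, which forces $\|\grad\eta_\liep(z_0)\|^2=0$, that is, $z_0\in C_\liep$. In particular $z_0$ is fixed by $\exp(t\mup(z_0))$, which is the source of the local model for $\mup$ near $z_0$ used below.

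Now suppose, for contradiction, that $\|\mup(z_0)\|>\min_{v\in\mup(Y)}\|v\|$. Replacing $z_0$ by a $K$--translate (which changes neither $\eta_\liep$ nor the property of being a local minimum) we may assume $\delta:=\mup(z_0)\in\lia_+$, so $\delta\in A(Y)_+=\mup(Y)\cap\lia_+$, a convex set by Corollary \ref{convexity}. Let $\beta_0\in A(Y)_+$ be its point of least norm, unique by strict convexity of $\|\cdot\|^2$ on $\lia$, so that $\|\beta_0\|=\min_{v\in\mup(Y)}\|v\|$ (every adjoint $K$--orbit in $\liep$ meets $\lia_+$), hence $\delta\neq\beta_0$. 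By the variational characterisation of the projection onto a convex set, $\langle\delta,\beta_0-\delta\rangle<0$, so $s\mapsto\|\delta+s(\beta_0-\delta)\|^2$ is strictly decreasing at $s=0$, and the whole segment $[\delta,\beta_0]$ lies in $A(Y)_+\subset\mup(Y)$. Using the Slice Theorem at the critical point $z_0$ (\cite{heinzner-schwarz-Cartan}) together with the local convexity of the gradient map (\cite{heinzner-schuetzdeller}), one produces, for small $s>0$, points $z_s\in Y$ arbitrarily close to $z_0$ with $\mup(z_s)=\delta+s(\beta_0-\delta)$; then $\eta_\liep(z_s)=\|\delta+s(\beta_0-\delta)\|^2/2<\|\delta\|^2/2=\eta_\liep(z_0)$, contradicting that $z_0$ is a local minimum. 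Therefore $\|\mup(z_0)\|=\min_{v\in\mup(Y)}\|v\|$ and $\eta_\liep(z_0)=m$.

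The main obstacle is the passage, in the previous paragraph, from the \emph{global} convexity of $A(Y)_+$ to a \emph{local} statement near $z_0$: that points of $Y$ close to $z_0$ realise the values $\mup(z_0)+s(\beta_0-\mup(z_0))$, i.e.\ a form of openness of $\mup$ onto $\mup(Y)$ at the critical point $z_0$. One can instead bypass Corollary \ref{convexity} and invoke the Stratification Theorem: a local minimum lies in some stratum $S_\ga$, and then $\eta_\liep(z_0)=\|\ga\|^2/2$ by \eqref{SB} and the Kempf--Ness description of $C_\liep$; for $S_\ga$ not the minimal stratum — which is open by the closure relation $\overline{S_\ga}\subset S_\ga\cup\bigcup_{\|\ga'\|>\|\ga\|}S_{\ga'}$ — the Kirwan--Ness minimal degeneracy of the norm square (\cite{kirwan,heinzner-schwarz-stoetzel}) endows the critical component $C_\liep\cap S_\ga$ with a positive Morse index for $\eta_\liep$, so it contains no local minimum. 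Checking the positivity of this index in the present semi-algebraic generality is the delicate point of that alternative approach.
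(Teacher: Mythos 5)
Your argument is essentially the paper's own proof, which consists of exactly the two facts you rely on: the $K$-invariance of $|\mup|^2$ and the convexity of $A(Y)_+$ from Corollary \ref{convexity}. The ``main obstacle'' you flag --- openness of $\mup$ onto its image near the critical point $z_0$, needed to turn global convexity into a local decrease of $\eta_\liep$ --- is precisely the local convexity data of \cite{heinzner-schuetzdeller} on which Corollary \ref{convexity} itself is built, so your first route closes and the stratification alternative is not needed.
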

\begin{proof}
  This follows since $| \mup |^2$ is $K$-invariant and $\mu(Z)_+$ is a
  convex subset of $\mathfrak a_+$.
\end{proof}

\begin{cor}
  If $\om $ and $\om'$ are cohomologous \Keler forms on $Z$ with
  momentum maps $\mu$ and $\tmu$ as in Lemma \ref{defmut}, then
  $X(\mu) = X(\tmu)$.
\end{cor}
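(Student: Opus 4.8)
The plan is to reduce the statement to Theorem \ref{image}, in exactly the way this was done in the proof of Theorem \ref{strato}. The key point is that, for a fixed point $x\in X$, whether $x$ is semistable is detected by the gradient--map image of the single closed $G$--stable set $\overline{G\cd x}$: by the very definition of $X(\mu)$ one has $x\in X(\mu)$ if and only if $\overline{G\cd x}\cap\mup\meno(0)\neq\vacuo$, that is, if and only if $0\in\mup(\overline{G\cd x})$, and similarly $x\in X(\tmu)$ if and only if $0\in\tmup(\overline{G\cd x})$.

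So the first step is to apply Theorem \ref{image} to the closed $G$--stable subset $Y=\overline{G\cd x}$; this yields the equality $\mup(\overline{G\cd x})=\tmup(\overline{G\cd x})$. The condition $0\in\mup(\overline{G\cd x})$ is then equivalent to $0\in\tmup(\overline{G\cd x})$, whence $x\in X(\mu)$ if and only if $x\in X(\tmu)$. Since $x\in X$ was arbitrary, this gives $X(\mu)=X(\tmu)$, completing the argument.

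I do not foresee a genuine obstacle, since all the substantial work is already contained in Theorem \ref{image}. The only thing to be careful about is that Theorem \ref{image} provides an honest equality of images, not merely equality up to translation; this is precisely what the normalization of the momentum maps in Lemma \ref{defmut} buys us, and it is exactly what allows the membership condition ``$0$ lies in the image'' to be transferred from $\mup$ to $\tmup$ without any compensating shift.
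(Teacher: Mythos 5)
Your argument is correct, and it is a genuinely more direct route than the one the paper takes. The paper's proof is the single observation that $X(\mu)=S_0$, i.e.\ that the semistable set is the zero stratum, and then invokes Theorem \ref{strato} (the coincidence of the two stratifications), which in turn rests on the characterizations \eqref{BB} and \eqref{SB} from \cite{heinzner-schwarz-stoetzel}. You instead bypass the stratification machinery entirely: for each $x\in X$ the orbit closure $\overline{G\cd x}$ is a closed $G$--stable subset of $Z$, Theorem \ref{image} gives the honest equality $\mup(\overline{G\cd x})=\tmup(\overline{G\cd x})$, and the definition of semistability says $x\in X(\mu)$ iff $0\in\mup(\overline{G\cd x})$, so membership transfers pointwise. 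This is exactly the mechanism underlying the proof of Theorem \ref{strato} anyway, so your proof is in effect an unfolding of the paper's one-liner that avoids quoting the Stratification Theorem; what the paper's phrasing buys is brevity and the conceptual placement of $X(\mu)$ as the open stratum, while yours buys self-containedness modulo Theorem \ref{image}. Your closing remark is also well taken: it is the normalization of $\tmu$ in Lemma \ref{defmut} that makes Theorem \ref{image} an equality of images rather than an equality up to translation, and without that the condition ``$0$ lies in the image'' would not transfer.
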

\begin{proof}
  It is enough to observe that $X(\mu) = S_0$.
\end{proof}

\end{document}